\newtheorem{theorem}{Theorem}
\newtheorem{lemma}{Lemma}
\newtheorem{corollary}{Corollary}
\newtheorem{proposition}{Proposition}
\newcommand{\bea}{\begin{eqnarray*}}
\newcommand{\eea}{\end{eqnarray*}}
\newcommand{\ben}{\begin{eqnarray}}
\newcommand{\een}{\end{eqnarray}}
\newcommand{\beq}{\begin{equation}}
\newcommand{\eeq}{\end{equation}}
\newcommand{\supp}{\operatorname{supp}}
\newcommand{\Id}{\operatorname{Id}}
\renewcommand{\hat}[1]{\widehat{#1}}
\newcommand{\om}{\omega}
\begin{document}

\title{BLOW UP of A HYPERBOLIC SQG MODEL}
\author{Hang Yang}
\email{hang.yang@rice.edu}
\address{Rice University
Department of Mathematics -- MS 136
P.O. Box 1892
Houston, TX 77005-1892}

\begin{abstract}
This paper studies of a variation of the hyperbolic blow up scenario suggested by Hou and Luo's recent numerical simulation \cite{HouLuo1}. In particular, we propose a "hyperbolic" surface quasi-geostrophic equation characterized by a incompressible velocity field with a modified Biot-Savart law. For this model, we will show finite time blow up for a wide class of initial data. 
\end{abstract} 
\maketitle

\section{introduction}
The study of fluid mechanics PDE traces back to Leonhard Euler when he deduced the famous Euler equation for motion of ideal fluid
\begin{align*}
\partial_t u+u\cdot \nabla u&=-\nabla p,\quad u(x,0)=u_0(x)\\ \nabla\cdot u&=0
\end{align*}
which is set in a domain $D\subset \mathbb{R}^d, d=2,3$ with no penetration through boundary $(u\cdot n)|_{\partial D}=0$. The vector field $u$ describes particle velocity at a given point and the scalar function $p$ represents the pressure. It is one of the most important PDE ever written and enters as a cornerstone into a great variety of science and engineering subjects. The equation in vorticity form can be obtained by taking curl of the original equation. In other words, if we set $\omega=\nabla \times u$ in 2D, the equation reads 
$$\partial_t \omega+ u\cdot\nabla\omega=0, \quad \omega(x,0)=\omega_0(x)$$
The velocity field $u$ relates to the vorticity $\omega$ via the Biot-Savart law
\begin{equation} \label{EulerBS}
u=\nabla^{\perp} (-\Delta_D)^{-1} \omega
\end{equation}
where $-\Delta_D$ is the Dirichlet Laplacian. The equation in 3D has more complicated Biot-Savart law (see e.g. \cite{Euciso}) and differs from the 2D equation by an extra term $\omega\cdot \nabla u$ on the right hand side. It turns out that without this term on the right-hand-side to cause vorticity stretching, 2D Euler has a simpler nature. Global regularity in a natural class $C^k(\overline{D})$ has been proved by Wolibner \cite{Wolibner} and H\"{o}lder and existence of global unique solution for rough initial data $\omega_0\in L^{\infty}$ has been shown by Yudovich \cite{Yudovich}. However, the global regularity of the 3D Euler equation, alongside with that of the Navier-Stokes equation (NS) are now the two major problems in fluid mechanics. The surface quasi-geostrophic equation (SQG) was then introduced in the study of geophysics by Constantin, Majda and Tabak \cite{ConstantinTabak}. The viscous SQG, mostly considered in $\mathbb{R}^2$ or $\mathbb{T}^2$, is given by
\begin{equation}  \label{viscousSQG}
\partial_t \omega+u\cdot\nabla \omega=(-\Delta) ^\gamma \omega\quad(0<\gamma\leq1), \quad \omega(x,0)=\omega_0(x)
\end{equation}
with the Biot-Savart law 
\begin{equation} \label{SQGBS}
u=\nabla^{\perp} (-\Delta)^{-1/2} \omega 
\end{equation}
The equation has many features in common with 3D Navier-Stokes equation (see also \cite{ConstantinTabak}). Regularity analyses of solutions and their dependence on dissipation parameter $\gamma$ have been established for equation (\ref{viscousSQG})(\ref{SQGBS}). In 1999, Constantin-Wu \cite{ConstantinWu} settled the subcritical case ($1/2<\gamma\leq 1$) by proving global regularity. Later in 2006, the critical case ($\gamma=1/2$) was resolved by two independent works: Kiselev-Nazarov-Volberg \cite{KNV} and Caffarelli-Vasseur \cite{CaffarelliVasseur}. Three other different proofs also followed afterwards (see \cite{KiselevNazarov}, \cite{ConstantinVicol} and \cite{ConstantinTarfuleaVicol}).

The absence of the dissipative term $(-\Delta)^\gamma \omega$ in (\ref{viscousSQG}) (which tends to regularize the solution) together with the same Biot-Savart law (\ref{SQGBS}) leads to the inviscid SQG equation
\begin{align*} \label{equation}
\partial_t \omega+u\cdot\nabla \omega=0 \\
u=\nabla^{\perp} (-\Delta)^{-1/2} \omega 
\end{align*}
The inviscid SQG is a close relative of the 2D Euler equation which can be seen clearly if we consider the following family of singular integral type Biot-Savart law bridges in between (\ref{EulerBS}) and (\ref{SQGBS})
\begin{equation} \label{BSinterpolation}
u=\nabla^{\perp}(-\Delta)^{-1+\beta}\omega,\quad 0< \beta < 1/2
\end{equation}
This family of models are called modified SQG equations. Global regularity of the inviscid SQG equation or any member of the modified SQG equations also remain challenging and open. 

It is also noteworthy to mention the famous (inviscid) Boussinesq equations (\ref{Bouss1})(\ref{Bouss2})(\ref{Bouss3}) which models large scale atmospheric and oceanic flows that cause cold fronts and jet flows (see \cite{Majda} for details). 
\begin{align}\label{Bouss1}
\partial_t \omega+u\cdot \nabla \omega&=\partial_{x_1}\theta,\quad \omega(x,0)=u_0(x)\\ \label{Bouss2}
\partial_t \theta+u\cdot \nabla \theta &=0,\quad \theta(x,0)=\theta_0(x) \\ \label{Bouss3}
u&=\nabla^{\perp}(-\Delta)^{-1}\theta
\end{align}
The significance of the Boussinesq equation is, on the one hand, that after switching to cylindrical coordinates and introducing a change of variable, it can be identified, away from the rotation axis, with the 3D axisymmetric Euler equation with swirl. On the other hand, such 2D hydrodynamics model retains the key feature (i.e. vorticity stretching) of the 3D Euler equation and the Navier-Stokes equation. Due to these reasons, the global regularity of solutions of the Boussinesq equation is also outstandingly difficult. 

However, a recent numerical investigation of Luo and Hou \cite{HouLuo1} has shed important light on a potential singularity formation scenario for 3D Euler equation. Moreover, their work inspires a series of work which greatly help people understand a wide range of hydrodynamics models. Luo and Hou's original setup was the axisymmetric 3D Euler equation in a infinite vertical cylindrical domain with a periodic boundary condition in $z$ and no flux on boundary. The initial condition was chosen to have non-zero odd (in $z$) swirl $u^\theta$ and zero angular vorticity $\omega^\theta$. The numerical evolution of these initial data results in a quick growth of $\omega^\theta$ near the circle of hyperbolic points of the flow lying at the intersection of the boundary and $z=0$.

Kiselev-\v{S}ver\'{a}k \cite{KiselevSverak} used this hyperbolic growth scenario to construct a 2D Euler flow on a disk where the gradient of the vorticity grows at a double exponential rate . Due to their work, the double exponential upper bounds (which go back to Wolibner and H\"oder) of the growth of the gradient of the vorticity is now known to be sharp. In 2015, Kiselev-Ryzhik-Yao-Zlato\v{s} \cite{KiselevLenyaYao} utilized this idea to prove singularity formation for patch evolution for a modified SQG equation.

In order to study the full Boussinesq equation, Kiselev-Tan \cite{KiselevTan} considered a hyperbolic variation where they replaced $\partial_{x_1}\theta$ with $\frac{\theta}{x_1}$ in (\ref{Bouss1}) and worked with a modified version of (\ref{Bouss3}) and get
\begin{align} \label{modifiedBouss1}
  \partial_t \omega+u\cdot \nabla \omega&=\frac{\theta}{x_1},\quad \omega(x,0)=\omega_0(x)\\ \label{modifiedBouss2}
\partial_t \theta+u\cdot \nabla \theta &=0,\quad \theta(x,0)=\theta_0(x) \\ 
u(x,t)=(-x_1 \int_{y_1y_2\geq x_1x_2}\frac{\omega(y,t)}{|y|^2}dy,&x_2\int_{y_1y_2\geq x_1x_2}\frac{\omega(y,t)}{|y|^2}dy) \label{hyperbolicBoussBS}
\end{align}
The Biot-Savart law (\ref{hyperbolicBoussBS}) is modified from the asymptotic formula for $u$ described in \cite{KiselevSverak}. They proved blow up of solutions for this modified model. This turns out to be the first blow up result among all non-local active scalar incompressible flow model. In the same paper, they also proved global regularity of (\ref{hyperbolicEuler})(\ref{hyperbolicEulerBS}) (essentially (\ref{modifiedBouss1})(\ref{modifiedBouss2})(\ref{hyperbolicBoussBS}) with $\theta(x,t)=\theta_0(x)\equiv 0$)
\begin{align} \label{hyperbolicEuler}
  \partial_t u+u\cdot \nabla u&=0,\quad u(x,0)=u_0(x)\\ 
u(x,t)=(-x_1 \int_{y_1y_2\geq x_1x_2}\frac{\omega(y,t)}{|y|^2}dy,&x_2\int_{y_1y_2\geq x_1x_2}\frac{\omega(y,t)}{|y|^2}dy) \label{hyperbolicEulerBS}
\end{align}
Independently and simultaneously, Hoang-Orcan-Radosz-Yang \cite{hyperbolicBoussinesq} proved blow up of the same equation (\ref{modifiedBouss1})(\ref{modifiedBouss2}) with a different Biot-Savart law
\begin{equation} \label{compressibleBoussBS}
u(x,t)=(-x_1 \int_{S_{\alpha}}\frac{\omega(y,t)}{|y|^2}dy,x_2\int_{S_{\alpha}}\frac{\omega(y,t)}{|y|^2}dy)
\end{equation}
where the $S_\alpha=\{(x_1,x_2):0<x_1,0<x_2<\alpha x_1\}$ is a sector in the first quadrant with arbitrary large $\alpha$ as a parameter. The main difference of the two similar models lies in the incompressibility of (\ref{hyperbolicEulerBS}) and compressibility of (\ref{compressibleBoussBS}). But the form of (\ref{compressibleBoussBS}) is closer to the original one presented in the pioneer work \cite{KiselevSverak}.

Our goal in this paper is to analyze the following model for the SQG equation
\begin{align}
\partial_t\omega+u\cdot\nabla\omega=0,\quad \omega(x,0)=\omega_0(x) \label{hyperbolicSQG}\\
u(x,t)=(-x_1 \int_{y_1y_2\geq x_1x_2}\frac{\omega(y,t)}{|y|^{2+\alpha}}dy,&x_2\int_{y_1y_2\geq x_1x_2}\frac{\omega(y,t)}{|y|^{2+\alpha}}dy) \qquad (0<\alpha< 1)\label{hyperbolicSQGBS}
\end{align}

In spirit of (\ref{EulerBS}),(\ref{SQGBS}) and (\ref{BSinterpolation}), the two models (\ref{hyperbolicEuler})(\ref{hyperbolicEulerBS}) and (\ref{hyperbolicSQG})(\ref{hyperbolicSQGBS}) are closely related and differ only in the SIO-type Biot-Savart law by an extra parameter. For the rest of the paper, we use the notation 
\begin{align*}
\Omega(x,t)&=\int_{y_1y_2\geq x_1x_2}\frac{\omega(y,t)}{|y|^{2+\alpha}}dy\\ 
R&=\{(x_1,x_2)|x_1\geq 0, x_2\geq 0\}\\
C_{0}^{1}(R)&=\{f\in C^1(R): \supp(f) \text{ is compact}\}
\end{align*} 
We will first prove local well-posedness for this model and then show blow up for a wide range of initial data. In particular, the following theorems will be proved.
  
\begin{theorem} \label{locallywellposed}
Suppose $\omega_0\in C^1_0(R)$. There exists $T=T(\omega_0,\alpha)$ such that the hyperbolic SQG equation (\ref{hyperbolicSQG})(\ref{hyperbolicSQGBS})  admits a unique solution $\omega(x,t)\in C(0,T;C_0^1(R))$.
\end{theorem}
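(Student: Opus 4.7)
The plan is a standard Picard iteration built around the velocity--vorticity coupling, organized around the special structure of the Biot-Savart law. The key structural observation, which will be exploited throughout, is that the integration domain $\{y_1 y_2 \geq x_1 x_2\}$ in (\ref{hyperbolicSQGBS}) depends on $x$ only through the product $x_1 x_2$, so $\Omega(x,t) = G(x_1 x_2, t)$ for some function $G$. As immediate corollaries: $u = G(x_1 x_2, t)(-x_1, x_2)$ is divergence-free; the product $x_1 x_2$ is conserved along particle trajectories (so trajectories lie on hyperbolas); $u$ vanishes on the coordinate axes; and the first quadrant $R$ is flow-invariant. It is natural to work in hyperbolic coordinates $s = x_1 x_2$, $\xi = \tfrac12\log(x_1/x_2)$, in which the transport equation reduces to the one-parameter family of 1D pure translations $\partial_t \tilde\omega - G(s,t)\,\partial_\xi \tilde\omega = 0$.

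The first technical step is to obtain quantitative bounds on the Biot-Savart map $\omega \mapsto u$. Via the unit-Jacobian change of variables $(y_1, y_2) = (\sqrt{s'}e^\xi, \sqrt{s'}e^{-\xi})$,
\[
G(s, t) = \int_s^\infty \int_{-\infty}^\infty \frac{\omega(\sqrt{s'}e^\xi, \sqrt{s'}e^{-\xi}, t)}{(2 s' \cosh 2\xi)^{1+\alpha/2}}\, d\xi\, ds'.
\]
For $\omega \in C^1_0(R)$ (with vanishing at the origin to tame the kernel), this yields $\|G(\cdot,t)\|_\infty \lesssim \|\omega\|_{C^1_0}$, hence $\|u(\cdot,t)\|_\infty \lesssim \|\omega\|_{C^1_0}$. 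Differentiating, $\partial_{x_1}u_1 = -G(s) - sG'(s)$ and analogues; using $|\omega(y)| \lesssim |y|$ near $0$ one gets $|G'(s)| \lesssim s^{-1/2-\alpha/2}$, so $sG'(s) = O(s^{(1-\alpha)/2})$, which is bounded precisely because $\alpha < 1$. This gives a Lipschitz bound for $u$ in $x$ on $R$ (with constants depending on $\|\omega\|_{C^1_0}$), as well as Lipschitz dependence of $u$ on $\omega$.

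Next I would set up the Picard iteration: with $\omega^{(0)} \equiv \omega_0$, let $u^{(n)}$ be constructed from $\omega^{(n)}$ via (\ref{hyperbolicSQGBS}), let $\Phi_t^{(n)}$ be the flow of $u^{(n)}$ (unique by the Lipschitz estimate above), and set $\omega^{(n+1)}(x,t) := \omega_0\bigl((\Phi_t^{(n)})^{-1}x\bigr)$. The $L^\infty$ bound on $u^{(n)}$ keeps $\operatorname{supp}\omega^{(n+1)}$ inside a fixed enlargement of $\operatorname{supp}\omega_0$ on any bounded time interval; the identity $\nabla \omega^{(n+1)}(\Phi_t^{(n)}x, t) = (D\Phi_t^{(n)}(x))^{-T}\nabla \omega_0(x)$, together with a Gronwall estimate on $D\Phi_t^{(n)}$ coming from the Lipschitz bound on $u^{(n)}$, controls $\|\nabla\omega^{(n+1)}\|_\infty$. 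For $T = T(\omega_0, \alpha)$ sufficiently small, the $\omega^{(n)}$ stay in a fixed ball of $C([0,T]; C^1_0(R))$ and, using the Lipschitz dependence of the Biot-Savart map on $\omega$, form a Cauchy sequence in $C([0,T]; L^\infty(R))$. The limit $\omega \in C([0,T]; C^1_0(R))$ solves (\ref{hyperbolicSQG})--(\ref{hyperbolicSQGBS}); uniqueness follows by a parallel Gronwall argument on the difference of two solutions.

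The main obstacle will be the uniform Lipschitz control of $u$ up to the axes, since $G'(s)$ is genuinely singular as $s \to 0$. The rescue is geometric: each component of $u$ carries a multiplicative factor of $x_1$ or $x_2$, and each partial derivative of $u$ carries a factor that combines with $G'(s)$ to yield the bounded quantity $sG'(s) = O(s^{(1-\alpha)/2})$. This compensation, together with the flow-invariance of the axes and of the first quadrant, is what makes the classical scheme go through for $0 < \alpha < 1$; at $\alpha = 1$ this naive argument breaks down and a more delicate analysis would be required.
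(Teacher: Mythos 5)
Your overall architecture --- build the flow map from the velocity, transport $\omega_0$ along it, and close a Picard/contraction argument whose engine is a Lipschitz bound on $u$ obtained by bounding $\nabla u$ --- is the same as the paper's, and your structural observations (that $\Omega$ depends on $x$ only through $s=x_1x_2$, that $u$ is divergence-free, and that trajectories are translations along hyperbolas) are correct and are exactly what the paper exploits in its blow-up section. The gap is in the step you yourself flag as the main obstacle. Writing $u=G(s)(-x_1,x_2)$, the diagonal entries of $\nabla u$ are indeed $\mp\bigl(G(s)+sG'(s)\bigr)$ and are controlled by your bound $sG'(s)=O(s^{(1-\alpha)/2})$; but the off-diagonal entries are $\partial_{x_2}u_1=-x_1^2G'(s)$ and $\partial_{x_1}u_2=x_2^2G'(s)$, and these do \emph{not} carry the compensating factor $s=x_1x_2$. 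In the regime you are trying to cover (support reaching the origin with $|\omega(y)|\lesssim|y|$, so that $|G'(s)|\sim s^{-(1+\alpha)/2}$), take $x=(1,\eps)$: then $|\partial_{x_2}u_1|=|G'(\eps)|\sim\eps^{-(1+\alpha)/2}\to\infty$, and indeed $|u_1(1,\eps)-u_1(1,2\eps)|\sim\eps^{(1-\alpha)/2}$ while $|x-x'|=\eps$, so $u$ is only H\"older, not Lipschitz, in the $x_2$-direction near the $x_1$-axis. Your Gr\"onwall control of $D\Phi^{(n)}_t$, hence of $\|\nabla\omega^{(n+1)}\|_{L^\infty}$, fails at exactly this point, and the failure is not merely technical: the transported function $\omega_0\circ\Phi_{-t}$ acquires a gradient contribution of size $x_1|G'(x_1x_2)|\,t$, so its membership in $C^1_0(R)$ is itself in doubt for such data.

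The paper sidesteps this entirely by restricting (implicitly, through the constant $n=\min\{x_1 : (x_1,x_2)\in\supp\omega_0\}$ and the class $O_\delta$ with $\delta<n$) to data whose support is bounded away from the $x_2$-axis. Then the hyperbola $y_1y_2=s$ meets $\supp\omega$ only for $y_1\in[n',N']$, the kernel $|y|^{-2-\alpha}$ is bounded on that set, and $\Omega_{x_1},\Omega_{x_2}$ (equivalently $G'$ itself, not just $sG'$) are uniformly bounded with no compensation needed; all four entries of $\nabla u$ are then bounded on the relevant compact set and the Picard argument closes. If you adopt that hypothesis --- which is in any case needed for $\Omega$ even to converge for general $C^1_0(R)$ data not vanishing at the origin --- your iteration scheme is correct and essentially identical to the paper's. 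As written, however, the assertion that each partial derivative of $u$ pairs $G'(s)$ with a factor producing the bounded quantity $sG'(s)$ is false for the off-diagonal terms, and the proof does not close in the generality you aim for.
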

\begin{theorem} \label{blowup}
There exists smooth initial data $\omega_0$ such that the corresponding solution $\omega(x,t)$ of (\ref{hyperbolicSQG})(\ref{hyperbolicSQGBS}) blows up in finite time. Specifically, the finite time blow up holds in the sense that $\int_{0}^{t}\| \nabla \omega(\cdot,s)\|_{L^{\infty}}ds$ becomes infinite when $t$ reaches the maximum existence time $T$.
\end{theorem}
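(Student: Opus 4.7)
The plan is to adapt the hyperbolic-flow blow-up scenarios of Kiselev--\v{S}ver\'ak \cite{KiselevSverak}, Kiselev--Tan \cite{KiselevTan} and \cite{hyperbolicBoussinesq}, while exploiting the enhanced singularity of the SQG-type kernel $|y|^{-(2+\alpha)}$, a feature absent in the Euler-like case $\alpha=0$ for which \cite{KiselevTan} proved global regularity of the analogous equation.

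The first step is to linearize the transport by a change of variables. Since $u = (-x_1,x_2)\Omega$ is tangent to every hyperbola $\{x_1x_2=c\}$ and $\Omega(x,t)$ depends on $x$ only through $\xi := x_1x_2$, introducing Riemann-type coordinates $(\xi,\eta) = (x_1x_2,\log(x_2/x_1))$ collapses (\ref{hyperbolicSQG}) into the pure shear
\[
\partial_t\omega + 2\Omega(\xi,t)\,\partial_\eta\omega = 0, \qquad \omega(\xi,\eta,t) = \omega_0\bigl(\xi,\eta-\Theta(\xi,t)\bigr),
\]
with $\Theta(\xi,t) := 2\int_0^t \Omega(\xi,s)\,ds$. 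Substituting this explicit formula back into the definition of $\Omega$ and changing variables in the integral yields the self-consistent integro-differential equation
\[
\partial_t\Theta(\xi,t) = \int_\xi^\infty (\xi')^{-(2+\alpha)/2}\,W(\xi',t)\,d\xi',\qquad W(\xi,t) = \int_{\R}\frac{\omega_0(\xi,\zeta)\,d\zeta}{(2\cosh(\zeta+\Theta(\xi,t)))^{(2+\alpha)/2}},
\]
encoding all of the dynamics in the scalar field $\Theta(\xi,t)$. Differentiating in $\xi$ gives the clean identity $\partial_\xi\Theta(\xi,t) = -\int_0^t \xi^{-(2+\alpha)/2}\,W(\xi,s)\,ds$, which already displays the singular prefactor $\xi^{-(2+\alpha)/2}$ driving the blow-up.

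Because $\omega$ is transported incompressibly, its $L^p$-norms are conserved, so blow-up can come only from gradients; in these coordinates $\partial_\xi\omega = \partial_\xi\omega_0 - \partial_\xi\Theta\cdot\partial_\eta\omega_0$, while Lagrangian trajectories satisfy $x_1(t) = x_1(0)e^{-\Theta/2}$, $x_2(t) = x_2(0)e^{\Theta/2}$, so $|\nabla_x\omega|$ is bounded from below by a product involving $|\partial_\xi\Theta|$ and the stretching factor $e^{\Theta/2}$. I would then choose a smooth, nonnegative, compactly supported $\omega_0$ that places a definite amount of mass on hyperbolas arbitrarily close to the origin (vanishing at the origin at a rate compatible with Theorem~\ref{locallywellposed}), and seek a closed differential inequality of the form $\dot I \geq c\,I^{1+\beta}$ with $\beta>0$, either for a pointwise functional like $|\partial_\xi\Theta(\xi_*,t)|\,e^{\Theta(\xi_*,t)/2}$ at a well-chosen $\xi_*$, or for a weighted moment $J(t) = \int\omega(y,t)\psi(y)/|y|^{2+\alpha}\,dy$ with $\psi$ tailored to the flow geometry. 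Integrating such an inequality gives finite-time divergence, which combined with the Beale--Kato--Majda-type continuation criterion accompanying Theorem~\ref{locallywellposed} forces $\int_0^t\|\nabla\omega\|_{L^\infty}\,ds$ to become infinite at the maximum existence time.

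The main obstacle is producing a genuinely \emph{super-linear} feedback in finite time. At any fixed $\xi$, the self-limitation $W \lesssim e^{-(2+\alpha)\Theta/2}$ yields only the logarithmic growth $\Theta(\xi,t)\sim \tfrac{2}{2+\alpha}\log(1+K(\xi)t)$, which is too slow on its own. The argument must therefore exploit the nonlocal coupling across the hyperbolic foliation: when $\omega_0$ reaches arbitrarily small hyperbolas, the prefactor $K(\xi) = \int_\xi^\infty(\xi')^{-(2+\alpha)/2}(\cdots)\,d\xi'$ diverges like $\xi^{-\alpha/2}$ as $\xi\to 0^+$, so deeper hyperbolas are accelerated by the vorticity sitting on shallower ones. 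This cascade in scales, available precisely because $\alpha>0$, should be what delivers the finite-time singularity, and turning this heuristic into a rigorous Gr\"onwall-type inequality, either via a carefully designed monotone functional or via a bootstrap over a dyadic family of hyperbolas $\{\xi = 2^{-k}\}$, is the heart of the proof.
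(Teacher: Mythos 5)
Your setup coincides with the paper's: the hyperbolic coordinates (the paper takes $z_1=\log(x_1x_2)$, $z_2=\log(x_1/x_2)$ rather than $(\xi,\eta)=(x_1x_2,\log(x_2/x_1))$, but these are equivalent), the reduction of (\ref{hyperbolicSQG})(\ref{hyperbolicSQGBS}) to the pure shear (\ref{modelinnewcoordinate1}) with $\tilde{\Omega}$ a function of the hyperbola label alone, the self-consistent integral equation for the accumulated displacement, and the identification of the divergence $K(\xi)\sim\xi^{-\alpha/2}$ (the factor $e^{-\alpha y_1/2}$ in (\ref{modelinnewcoordinate2})) as the engine of blow-up. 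All of that is correct and matches the paper. But the step you yourself flag as ``the heart of the proof'' --- converting the cascade heuristic into a closed differential inequality --- is precisely the step you have not supplied, and the candidate functionals you float do not obviously close: anything anchored at a fixed hyperbola $\xi_*$ saturates logarithmically, as you observe, and the dyadic bootstrap is only named, not executed. As written, the proposal therefore has a genuine gap at its central step.

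The paper's resolution is a moving front rather than a fixed functional. Let $\tilde{X}(z_1,t)$ be the total $z_2$-displacement of the hyperbola labelled $z_1$, set $F(z_1,t)=z_1+\tilde{X}(z_1,t)$, and track $Z(t)=\max\{z_1:F(z_1,t)=0\}$, i.e.\ the largest $z_1$ whose fluid has been lifted by the full amount $|z_1|$. Because the support condition $n\le x_1\le N$ confines $\supp\tilde{\omega}_0$ to a strip $z_2=z_1+O(1)$, the condition $F=0$ means that the slice at level $Z(t)$ has just arrived at $z_2\approx 0$, where the weight $(\cosh y_2)^{-1-\alpha/2}$ is of order one; at that moment the layer $y_1\in[Z(t),Z(t)+1]$ alone contributes to $\partial_t\tilde{X}(Z(t),t)$ an amount bounded below by $Ce^{-\alpha Z(t)/2}$ (Lemma \ref{initialboundedbelow} provides the uniform lower bound on the mass of each hyperbolic slice, and Lemma \ref{welldefined} guarantees $Z(t)$ exists). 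Since $\partial_{z_1}F\le 1$ by monotonicity of $\tilde{X}$ in $z_1$, differentiating $F(Z(t),t)=0$ yields $Z'(t)\le -Ce^{-\alpha Z(t)/2}$ a.e., which drives $Z$ to $-\infty$ in finite time for every $\alpha>0$; hence $|\tilde{X}|$ becomes infinite in finite time, and Proposition \ref{BKM} converts this into $\int_0^t\|\nabla\omega(\cdot,s)\|_{L^\infty}ds\to\infty$. This front-tracking device (or an equivalent) is what your argument is missing; note also that it uses only the mild rate $e^{-\alpha Z/2}$ along the front, not a superlinear inequality $\dot I\ge cI^{1+\beta}$ for a global functional.
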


\section{local wellposedness of solutions}
Local existence of solutions is proved via Picard's Theorem on Banach Spaces. The flow map $X(x)=(X_1(x),X_2(x))$ of (\ref{hyperbolicSQG}) can be derived from \begin{equation} \label{flowmap1}
\frac{dX}{dt}(x,t)=u(X(x,t),t),\quad X(x,0)=x 
\end{equation}
with the velocity field $u$ given by Biot-Savart law (\ref{hyperbolicSQGBS}). Along the particle trajectory, $\omega_0$ is transported 
\begin{equation}\label{transported}
\omega(x,t)=\omega_0(X^{-1}(x,t))
\end{equation}
It suffices to show that ODE $(\ref{flowmap1})$ can be solved local in time in an open subset of appropriately chosen Banach space.
For convenience, let us set the following notation
\begin{align} 
\label{boundconstants1} n&:=\min\{x_1\;| (x_1,x_2)\in \supp \om_0\}\\
\label{boundconstants2} N&:=\max\{x_1\;| (x_1,x_2)\in \supp \om_0\}\\
M&:=\max\{x_2\;| (x_1,x_2)\in \supp \om_0\}
\end{align}
and define $\mathcal{B}=C^1(\mathbb{R}^2)$ and its subsets $O_{\delta}=\{X\in\mathcal{B}: X=\Id+\hat{X},  \|\hat{X}(\cdot,t)\|_{C^1(R)}< n-\delta, \inf_{x} \det(\nabla X)(x)>1/3\}$ where $\delta<n$ is a parameter.

\begin{corollary} \label{openSubset}
For any $\delta<n$, $O_{\delta}$ is a non-empty open subset of $\mathcal{B}$ that consists of local-homeomorphisms.
\end{corollary}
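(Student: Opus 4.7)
The proof should be a direct verification of the three properties (non-emptiness, openness, and the local-homeomorphism condition) and I expect no serious technical obstacle, so the plan is mainly to identify the right elementary tools.

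For non-emptiness, I would simply observe that $X = \Id$ (i.e., $\hat X \equiv 0$) belongs to $O_\delta$: we have $\|\hat X\|_{C^1(R)} = 0 < n-\delta$ since $\delta < n$, and $\det(\nabla X)\equiv 1 > 1/3$.

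For openness, I would fix an arbitrary $Y = \Id + \hat Y \in O_\delta$ and show that a $C^1$-ball around $Y$ lies in $O_\delta$. The first defining inequality $\|\hat Y\|_{C^1(R)} < n-\delta$ is preserved under a $C^1$-perturbation of size less than $(n-\delta) - \|\hat Y\|_{C^1(R)} > 0$ by the triangle inequality applied to $\hat X = \hat Y + (X-Y)$. For the determinant condition, I would use that on any bounded set of $2\times 2$ matrices the map $A\mapsto \det A$ is Lipschitz; concretely, writing $\det(\nabla X)(x) - \det(\nabla Y)(x)$ as a polynomial in the entries of $\nabla\hat X$ and $\nabla\hat Y$, one gets
\begin{equation*}
\sup_{x\in R}\bigl|\det(\nabla X)(x)-\det(\nabla Y)(x)\bigr| \;\le\; C\bigl(1+\|\hat X\|_{C^1}+\|\hat Y\|_{C^1}\bigr)\,\|X-Y\|_{C^1(R)},
\end{equation*}
so if $\inf_x\det(\nabla Y)(x) > 1/3 + \eta$ for some $\eta>0$, any $X$ with $\|X-Y\|_{C^1(R)}$ sufficiently small satisfies $\inf_x\det(\nabla X)(x) > 1/3$. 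Intersecting the two neighborhoods gives a $C^1$-ball inside $O_\delta$.

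For the local-homeomorphism property, I would invoke the inverse function theorem pointwise: for any $X\in O_\delta$ and any $x\in\R^2$, $\det(\nabla X)(x) > 1/3 > 0$, so $\nabla X(x)$ is invertible and $X$ admits a $C^1$ inverse on a neighborhood of $x$; hence $X$ is a local diffeomorphism, in particular a local homeomorphism, at every point of $\R^2$.

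The only mildly delicate step is the determinant Lipschitz estimate used for openness, but since $O_\delta$ already enforces a uniform $C^1$-bound on $\hat X$, this reduces to the elementary fact that $\det$ is locally Lipschitz; I do not anticipate any real obstacle beyond bookkeeping of constants.
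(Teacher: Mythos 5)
Your proposal is correct and follows essentially the same route as the paper: non-emptiness via the identity map, openness by showing the two defining conditions are stable under small $C^1$ perturbations (the paper phrases this as continuity of $\inf_x\circ\det\circ\nabla$ and of the shifted $C^1$-norm, which is exactly your quantitative Lipschitz argument made abstract), and the local-homeomorphism property via the inverse function theorem. If anything, your version of the last step is slightly more careful than the paper's, since you explicitly invoke the condition $\det(\nabla X)>1/3$ to justify applying the inverse function theorem, whereas the paper's wording cites only the $C^1$ regularity of the members of $O_\delta$.
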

\begin{proof}
$O_{\delta}$ is non-empty since it contains identity and its multiples $cI$ for appropriate $c$. Due to continuity of the maps $\inf_x,  \det, \nabla$, the preimage $(\inf\circ\det\circ\nabla)^{-1}(1/3,\infty)$ is open. Due to continuity of $\|\cdot\|_{C^1}$ and the shift by identity $S_{Id}$, $S_{Id}[ (\|\cdot\|_{C^1})^{-1}(0,n-\delta)]$ is also open. Therefore, $O_{\delta}$ is open. By inverse function theorem, as all members of $O_{\delta}$ are $C^1$, they must all be local-homeomorphisms. 
\end{proof}

We quote the following lemma by Hadamard and recall a calculus inequality, both of which can be found in \cite{MajdaBertoxxi}.

\begin{lemma} \label{homeomorphism1}
Suppose that $X\in \mathcal{B}$ is a local homeomorphism and there exists $c$ such that $\|(\nabla X)^{-1}\|_{L^\infty }\leq c$, then $X$ is a homeomorphism of $\mathbb{R}^2$ onto $\mathbb{R}^2$.
\end{lemma}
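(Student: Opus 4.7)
The plan is to upgrade the local homeomorphism $X$ to a global one by proving that it is both surjective and injective; an open continuous bijection between open sets of $\mathbb{R}^2$ is automatically a homeomorphism, so bijectivity together with the local homeomorphism hypothesis closes the argument.

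For surjectivity I would use a path-lifting argument. Fix any $x_0\in\mathbb{R}^2$ and set $y_0:=X(x_0)$. Given an arbitrary target $y\in\mathbb{R}^2$, I lift the straight segment $\gamma(t)=y_0+t(y-y_0)$, $t\in[0,1]$, through $X$ by solving
\[
\tilde\gamma'(t)=\bigl(\nabla X(\tilde\gamma(t))\bigr)^{-1}(y-y_0),\qquad \tilde\gamma(0)=x_0.
\]
The hypothesis $\|(\nabla X)^{-1}\|_{L^\infty}\leq c$ forces $\nabla X$ to be invertible everywhere, so the inverse function theorem provides local solvability of this ODE, and the pointwise estimate $|\tilde\gamma'(t)|\leq c\,|y-y_0|$ rules out finite-time blow-up of the lift. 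The standard ODE continuation theorem then extends $\tilde\gamma$ to all of $[0,1]$, and $X(\tilde\gamma(1))=\gamma(1)=y$.

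For injectivity I would exploit simple connectedness of $\mathbb{R}^2$. The same lifting ODE applies to any $C^1$ base curve, so $X$ has the global unique path-lifting property, which together with the local homeomorphism hypothesis makes $X$ a covering map. Any connected covering of the simply connected plane is a homeomorphism; concretely, if $X(x_1)=X(x_2)$, I join $x_1$ to $x_2$ by a smooth path $\alpha$, contract the loop $X\circ\alpha$ to a point inside $\mathbb{R}^2$, and lift the contraction through $X$ to conclude that $\alpha$ itself is null-homotopic rel endpoints, so $x_1=x_2$.

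The main obstacle I anticipate is the injectivity step, specifically verifying that lifts depend continuously on the base path so that the contraction of the loop $X\circ\alpha$ really does lift to a contraction of $\alpha$ with endpoints fixed. This reduces to continuous dependence of the lifting ODE on the parameter indexing a homotopy, which is afforded again by the uniform bound on $(\nabla X)^{-1}$: it keeps all the lifts in a common compact set and provides the Lipschitz control needed to invoke standard continuity results for ODE flows.
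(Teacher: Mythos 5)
The paper does not actually prove this lemma: it is quoted verbatim as a result of Hadamard, with a citation to Majda--Bertozzi, so there is no in-paper argument to compare yours against. Your proposal is the standard proof of the Hadamard(--Caccioppoli) global inverse function theorem, and it is correct in its essentials: the lifting ODE $\tilde\gamma'=(\nabla X(\tilde\gamma))^{-1}(y-y_0)$ satisfies $\frac{d}{dt}X(\tilde\gamma(t))=y-y_0$, the uniform bound $c$ confines the lift to a compact ball and so prevents escape before $t=1$, giving surjectivity, and the covering-space argument over the simply connected plane gives injectivity. Two small points deserve explicit mention if you write this up. First, the right-hand side of your ODE is only continuous (since $X\in C^1$), so Peano gives existence but not uniqueness of solutions; uniqueness of the \emph{lift} instead follows from the topological fact that two continuous lifts through a local homeomorphism agree on an open and closed set. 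Second, the implication ``local homeomorphism with path lifting $\Rightarrow$ covering map'' is itself a nontrivial (though standard) step; an alternative that shortcuts the homotopy-lifting bookkeeping you flag as the main obstacle is to use the bound on $(\nabla X)^{-1}$ to show $X$ is \emph{proper} (preimages of compact sets are compact, by lifting segments backwards), and then invoke the fact that a proper local homeomorphism onto $\mathbb{R}^2$ is a covering with finitely many sheets, necessarily one sheet by simple connectedness.
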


\begin{lemma}\label{homeomorphism2}
Let $X:\mathbb{R}^d\to\mathbb{R}^d$ be a smooth, invertible transformation with $ |\det(\nabla X)(x)|>c$ for some $c>0$ and all $x$, then $$\|(\nabla X)^{-1}\|_{C^1}\leq C\|\nabla X\|_{C^1}^{2d-1}$$
\end{lemma}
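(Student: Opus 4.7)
The plan is to write $(\nabla X)^{-1}$ explicitly via Cramer's rule and then differentiate using the standard identity for the derivative of a matrix inverse. The power $2d-1$ will emerge naturally from counting: two factors of $(\nabla X)^{-1}$ in the derivative formula, each contributing $\|\nabla X\|^{d-1}_{C^1}$ via cofactors, plus one factor of $\|\nabla X\|_{C^1}$ from differentiating $\nabla X$ itself.

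First I would set up the $L^\infty$ bound. By Cramer's rule, $(\nabla X)^{-1} = \frac{1}{\det(\nabla X)}\,\mathrm{adj}(\nabla X)$, and each entry of the adjugate matrix is a homogeneous polynomial of degree $d-1$ in the entries of $\nabla X$. Combined with the hypothesis $|\det(\nabla X)|>c$, this immediately yields
\[
\|(\nabla X)^{-1}\|_{L^\infty} \leq \frac{C_d}{c}\,\|\nabla X\|_{L^\infty}^{d-1} \leq \frac{C_d}{c}\,\|\nabla X\|_{C^1}^{d-1}.
\]

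Next I would handle the derivative. Differentiating the identity $(\nabla X)^{-1}(\nabla X) = \Id$ in the direction $\partial_i$ gives the standard formula
\[
\partial_i\bigl((\nabla X)^{-1}\bigr) = -(\nabla X)^{-1}\,\bigl(\partial_i \nabla X\bigr)\,(\nabla X)^{-1}.
\]
Taking $L^\infty$ norms and using the bound above twice, together with $\|\partial_i \nabla X\|_{L^\infty} \leq \|\nabla X\|_{C^1}$, gives
\[
\|\partial_i\bigl((\nabla X)^{-1}\bigr)\|_{L^\infty} \leq \frac{C_d^2}{c^2}\,\|\nabla X\|_{C^1}^{2(d-1)}\,\|\nabla X\|_{C^1} = \frac{C_d^2}{c^2}\,\|\nabla X\|_{C^1}^{2d-1}.
\]
Adding the $L^\infty$ bound (which is of lower order in $\|\nabla X\|_{C^1}$, and can be absorbed up to a constant depending on $c$) yields the claimed $C^1$ estimate with $C = C(d,c)$.

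There is no real obstacle here; this is a direct computation and the only thing to be careful about is bookkeeping of the exponent. The takeaway is simply that the exponent $2d-1$ is the sum $(d-1) + (d-1) + 1$: one factor $(d-1)$ per occurrence of $(\nabla X)^{-1}$ in the derivative formula, and a $+1$ from differentiating $\nabla X$ once. If one wanted to sharpen the dependence on $c$, one could track it explicitly, but for the purposes of the local well-posedness argument only the polynomial dependence on $\|\nabla X\|_{C^1}$ matters, and this is exactly what the lemma records.
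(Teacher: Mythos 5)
Your argument is correct, and it is the standard proof of this estimate. Note that the paper does not actually prove this lemma: it is quoted as a known calculus inequality from \cite{MajdaBertoxxi}, so there is no in-paper argument to compare against; your computation via Cramer's rule for the zeroth-order bound and the identity $\partial_i\bigl((\nabla X)^{-1}\bigr)=-(\nabla X)^{-1}(\partial_i\nabla X)(\nabla X)^{-1}$ for the first-order bound is exactly the expected route, and your bookkeeping $(d-1)+(d-1)+1=2d-1$ is right. The only step you assert without justification is absorbing the lower-order term $\|\nabla X\|_{C^1}^{d-1}$ into $\|\nabla X\|_{C^1}^{2d-1}$: this needs a lower bound on $\|\nabla X\|_{C^1}$, which does follow from the hypothesis since $c<|\det(\nabla X)|\leq C_d\|\nabla X\|_{L^\infty}^{d}$ forces $\|\nabla X\|_{L^\infty}\geq (c/C_d)^{1/d}$, so the absorption constant indeed depends only on $c$ and $d$ as you claim; it is worth adding that one line.
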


By combining \textbf{Corollary \ref{openSubset}} and \textbf{Lemma \ref{homeomorphism1}, \ref{homeomorphism2}}, we see that $O_{\delta}$ is an open subset consists of bijective global homeomorphisms on $\mathbb{R}^2$.

\begin{lemma} \label{boundslemma}
 For all $X\in O_{\delta}$, there exists $n',N',M'>0$ such that 
\begin{align} \label{suppbound}
\supp (\om)\subset [n',N']\times [0,M'] 
\end{align}
\end{lemma}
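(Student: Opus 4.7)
The plan is to unwind the transport identity \eqref{transported}, which gives $\supp\omega(\cdot,t)=X(\supp\omega_0)$, so the task reduces to bounding the image of the compact box $[n,N]\times[0,M]\supset\supp\omega_0$ under $X=\Id+\hat{X}$, using only the $C^1$ smallness $\|\hat{X}\|_{C^1(R)}<n-\delta$ that is built into the definition of $O_\delta$.

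I would proceed coordinate by coordinate. For $y\in\supp\omega_0$, write $X_i(y)=y_i+\hat{X}_i(y)$, where $|\hat{X}_i(y)|\le\|\hat{X}\|_{C^1(R)}<n-\delta$. Combined with \eqref{boundconstants1}--\eqref{boundconstants2} and the definition of $M$, this produces
\[
  \delta \;=\; n-(n-\delta) \;\le\; X_1(y) \;\le\; N+(n-\delta), \qquad X_2(y)\;\le\; M+(n-\delta).
\]
Taking $n':=\delta$, $N':=N+n-\delta$, $M':=M+n-\delta$ secures the upper bounds and the positive lower bound on $X_1$.

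The one point that does not follow from the $C^1$ bound alone is the lower bound $X_2(y)\ge 0$ required to put the support inside $[n',N']\times[0,M']$. For this I would invoke the structural feature of the Biot--Savart law \eqref{hyperbolicSQGBS}: since $u_1=-x_1\Omega$ and $u_2=x_2\Omega$ vanish on $\{x_1=0\}$ and $\{x_2=0\}$ respectively, the axes are invariant under the ODE \eqref{flowmap1}, and hence so is the first quadrant $R$. In the Picard iteration setting in which this lemma is actually used, $X$ arises as such a flow map, so $X(R)\subset R$ and in particular $X(\supp\omega_0)\subset R$, providing the missing $X_2\ge 0$.

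I do not expect any genuine obstacle in this lemma: it is a routine bookkeeping step about how a near-identity $C^1$ map can displace a compact set. The only mild subtlety is the boundary constraint $x_2\ge 0$, which is handled by the structural invariance of $R$ under the flow rather than by the quantitative bounds defining $O_\delta$.
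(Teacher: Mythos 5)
Your proof is correct and takes essentially the same route as the paper: reduce via the transport identity to bounding the image of $[n,N]\times[0,M]$ under the near-identity map $X=\Id+\hat{X}$, and use the bound $\|\hat{X}\|_{C^1(R)}<n-\delta$ from the definition of $O_\delta$ to control the displacement coordinate by coordinate. Your extra observation about why $X_2\geq 0$ (invariance of the quadrant under the flow, since $u$ is tangential on the axes) is a point the paper passes over with ``argue similarly,'' so your version is if anything slightly more careful.
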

\begin{proof}
Take $X\in O_\delta$. In the light of $(\ref{transported})$, $\supp(\omega)\subset X([n,N]\times M,t)$.  Besides, by triangle inequality 
$$n-(n-\delta) \leq X_1(x,t)=x_1+\hat{X_1}(x,t)\leq N+(n-\delta)$$ Argue similarly to get bounds for $X_2$. Combine the two bounds and the result follows from taking $0<n'\leq \delta, N'\geq N+n+\delta$ and $M'\geq M+n+\delta$.   
\end{proof}

\begin{lemma} \label{lipschitzlemma}
The velocity $u$ defined in $(\ref{hyperbolicSQGBS})$ satisfies
$$|u(X)-u(Y)|\leq C|X-Y|$$ 
for all $X,Y\in O_{\delta}$ with $C$ independent of the choice of $X, Y$. 
\end{lemma}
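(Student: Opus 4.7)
The plan is to use the change of variables $y=X(z)$ to move the $X$-dependence from the transported vorticity into the integrand and the integration region. With the transport identity $\omega(X(z),t)=\omega_0(z)$ and $\det\nabla X>1/3>0$ (so the Jacobian contributes no absolute value), the scalar factor driving the Biot-Savart law becomes
$$\Omega(X)(x)=\int_{A_X(x)}f_X(z)\,dz,\qquad A_X(x):=\{z:X_1(z)X_2(z)\geq x_1x_2\},\qquad f_X(z):=\frac{\omega_0(z)\,\det\nabla X(z)}{|X(z)|^{2+\alpha}},$$
and similarly for $Y$. The virtue of this rewriting is that the $X$-dependence now sits only in the smooth integrand $f_X$ and in the indicator of the super-level set $A_X$, both of which can be estimated separately. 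I would then decompose
$$\Omega(X)(x)-\Omega(Y)(x)=\int_{A_X\cap A_Y}\bigl(f_X-f_Y\bigr)dz+\int_{A_X\setminus A_Y}f_X\,dz-\int_{A_Y\setminus A_X}f_Y\,dz.$$

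For the first (smooth) term, the definition of $O_\delta$ together with the triangle inequality gives $X_1(z)\geq n-(n-\delta)=\delta$ for every $z\in\supp\omega_0$, and likewise for $Y$, so the denominators in $f_X,f_Y$ stay bounded away from $0$. Combining with the uniform $C^1$-bound on members of $O_\delta$ and with Lemma \ref{boundslemma}, the mean value theorem yields $|f_X(z)-f_Y(z)|\leq C_\delta\|X-Y\|_{C^1}$ pointwise on $\supp\omega_0$, and integration over this set of finite measure controls the first term by $C\|X-Y\|_{C^1}$.

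The main obstacle is the two boundary terms. Since $|f_X|,|f_Y|$ are uniformly bounded on $\supp\omega_0$, everything reduces to estimating the measure of $A_X(x)\triangle A_Y(x)\cap\supp\omega_0$. Setting $\phi_X(z):=X_1(z)X_2(z)$, any $z$ in this symmetric difference has $\phi_X(z)$ and $\phi_Y(z)$ on opposite sides of $x_1x_2$, so $|\phi_X(z)-x_1x_2|\leq|\phi_X(z)-\phi_Y(z)|\leq C\|X-Y\|_{C^0}$ by a product-rule estimate on the bounded set $\supp\omega_0$. Hence the symmetric difference sits in a tubular neighborhood of width $\lesssim\|X-Y\|_{C^0}$ around the level curve $\{\phi_X=x_1x_2\}$. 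Because $X$ lies close to the identity, $\nabla\phi_X=X_2\nabla X_1+X_1\nabla X_2$ is close to $(z_2,z_1)$, and therefore $|\nabla\phi_X|$ is bounded below on $\supp\omega_0$ by a positive constant depending only on $n$ and $\delta$. The co-area formula then gives $|A_X(x)\triangle A_Y(x)\cap\supp\omega_0|\leq C\|X-Y\|_{C^0}$, so the boundary terms also contribute $O(\|X-Y\|_{C^0})$. Putting the two estimates together yields $|\Omega(X)(x)-\Omega(Y)(x)|\leq C\|X-Y\|_{C^1}$; restoring the prefactors $-x_1,x_2$, which are bounded on the set where $\Omega$ does not vanish (i.e.\ on the image of a fixed bounded neighborhood of $\supp\omega_0$ under the flow), completes the Lipschitz estimate.
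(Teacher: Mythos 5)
Your argument takes a genuinely different route from the paper's. The paper reads the lemma as a statement about spatial regularity of the velocity field: it writes out $\nabla u$ explicitly as in (\ref{gradU}) and bounds each entry uniformly (using that $\supp\omega\subset[n',N']\times[0,M']$ from Lemma \ref{boundslemma}), so that $x\mapsto u(x)$ is Lipschitz uniformly over $O_\delta$. You instead fix the spatial point and estimate how the field itself varies with the flow map, by pulling the integral back to the Lagrangian frame and splitting the difference into an integrand term (where the Jacobian determinant is precisely why the $C^1$ distance, not just $C^0$, is needed) and a domain term controlled by the measure of the symmetric difference of the super-level sets of $\phi_X=X_1X_2$. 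These are complementary halves of the estimate actually needed to run Picard on (\ref{flowmap1}): writing $F(X)=u_X\circ X$, one has $\|F(X)-F(Y)\|\le\|u_X\circ X-u_X\circ Y\|+\|(u_X-u_Y)\circ Y\|$, and the paper's computation controls the first term while yours controls the second. Your half is the one the paper's written proof glosses over, so the decomposition is a worthwhile contribution; but as a standalone proof of the lemma it is incomplete, since you never establish the uniform bound on $\nabla u_X$ and hence cannot control $u_X\circ X-u_X\circ Y$.

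Two further points need repair. Your lower bound on $|\nabla\phi_X|$ is justified by ``$X$ lies close to the identity,'' but membership in $O_\delta$ only gives $\|\hat{X}\|_{C^1}<n-\delta$, which need not be small. The correct source of the lower bound is the determinant condition: since $\nabla\phi_X=(X_2,X_1)\,\nabla X$ and $|(X_2,X_1)|\ge X_1\ge\delta$ on $\supp\omega_0$, the bound $\det\nabla X>1/3$ gives $|\nabla\phi_X|\ge\tfrac{\delta}{3}\|\nabla X\|^{-1}>0$. Even with that, passing from ``the symmetric difference lies in $\{|\phi_X-x_1x_2|\le C\|X-Y\|_{C^0}\}$'' to the measure bound requires more than a pointwise gradient lower bound (the coarea formula also needs control on the lengths of the level sets, or alternatively monotonicity of $\phi_X$ along a fixed coordinate direction plus Fubini); this step should be spelled out. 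Finally, the closing claim that the prefactors $x_1,x_2$ are bounded ``on the set where $\Omega$ does not vanish'' is false: $\Omega$ depends only on the product $x_1x_2$, so it is nonzero on an unbounded hyperbolic region. The honest fix is that in the Picard iteration the velocity is only ever evaluated at $X(x)$ for $x$ in the fixed bounded set controlled by Lemma \ref{boundslemma} --- a point on which the paper's own proof is equally informal.
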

\begin{proof} By \textbf{Lemma \ref{boundslemma}}, we need only to show that $\nabla u$  
\begin{equation}\label{gradU}
\nabla u=\begin{pmatrix} -\Omega-x_1\Omega_{x_1}& x_2\Omega_{x_1}\\ -x_1\Omega_{x_2}& \Omega+x_2\Omega_{x_2}
\end{pmatrix}
\end{equation}
is uniformly bounded. Uniform boundedness of $\|\Omega\|_{L^{\infty}}$ is obvious as $supp(\omega)$ is uniformly bounded away from the origin. And
\begin{align*}
|\partial_{x_1}\Omega|=\int_{0}^{\infty} \frac{\omega(y_1,\frac{x_1x_2}{y_1},t)}{[\big(\frac{x_1x_2}{y_1}\big)^2+y_1^2}]^{1+\alpha/2}dy_1 
&\leq \int_{n'}^{N'} \frac{\omega(y_1,\frac{x_1x_2}{y_1},t)}{[\big(\frac{x_1x_2}{y_1}\big)^2+y_1^2}]^{1+\alpha/2}dy_1\\
&\leq  \|\omega\|_{L^\infty}\int_{n'}^{N'}  \frac{1}{[\big(\frac{x_1x_2}{y_1}\big)^2+y_1^2]^{1+\alpha/2}}dy_1\\
&= \|\omega_0\|_{L^\infty} \int_{n'}^{N'}  \frac{1}{[\big(\frac{x_1x_2}{y_1}\big)^2+y_1^2]^{1+\alpha/2}}dy_1\\
&\leq C(\omega_0,n',N',M')
\end{align*}
Entries involving $\partial_{x_2}\Omega$ can be bounded in a similar fashion. In the end, direct application of Picard's theorem completes the proof.
\end{proof}

\section{Finite Time Blow Up Of solutions}
Before showing blow up, let us first prove an analogue of the well-known Beal-Kato-Majda criterion which will serve as diagnostics for continuation of the solution and the blow up. For the rest of the paper, the constant $C$ may change from line to line but the dependence is only on $\omega_0$ and $\alpha$. We will also take advantage of the constants defined in (\ref{boundconstants1}) and (\ref{boundconstants2}).
\begin{proposition} \label{BKM}
Suppose $\omega\in C(0,T;C^1_0(R))$ solves (\ref{modelinnewcoordinate1})(\ref{modelinnewcoordinate2}) with initial data $\omega_0\in C_0^1(R)$ which does not identically vanish on $x_1$ axis and is bounded away from the origin. If 
$$\int_0^T\|\nabla\omega(\cdot,t)\|_{L^{\infty}}dt<\infty$$
then the solution $\omega$ can be continued to $[0,T+t_0)$ for some $t_0>0$. Otherwise, if $T$ is the largest time of existence of such solution $\omega$, then we must necessarily have
$$\lim_{t\to T}\int_{0}^{t}\|\nabla\omega(\cdot,t)\|_{L^{\infty}}dt=\infty$$
\end{proposition}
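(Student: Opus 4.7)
The plan is a classical Beale-Kato-Majda continuation argument: assuming the integrability hypothesis, I will derive uniform $C_0^1(R)$ bounds on $\omega(t)$ over $[0,T]$ and then reinvoke Theorem \ref{locallywellposed} at time $T$ to continue the solution. A few structural facts simplify the work. Along the Lagrangian flow $\omega$ is transported, so $\|\omega(t)\|_{L^\infty}$ is conserved. The substitution $z=y_1 y_2$ in the defining integral of $\Omega$ shows that $\Omega(x,t)$ depends on $x$ only through the product $x_1 x_2$, so $x_1\Omega_{x_1}=x_2\Omega_{x_2}$; the trace of (\ref{gradU}) vanishes, $\nabla\cdot u=0$, and $\det\nabla X\equiv 1$. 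Moreover, the trajectory equations $\dot X_1=-X_1\Omega(X,t)$ and $\dot X_2=X_2\Omega(X,t)$ preserve the product $X_1 X_2=x_1 x_2$, so particles travel along hyperbolae and the first quadrant is invariant.

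The crux is to translate the hypothesis into a uniform positive lower bound on $X_1(x,t)$ over $\supp\omega_0$ throughout $[0,T]$. For this I would use the $x_1$-axis, which is invariant under the flow and on which $\Omega$ reduces to the $x$-independent quantity $\Omega_0(t):=\int_R\omega(y,t)\,|y|^{-2-\alpha}\,dy$. A trajectory on this axis is $X_1(x_1,0,t)=x_1\exp(-\int_0^t\Omega_0\,ds)$, so by the transport formula $\omega(y_1,0,t)=\omega_0(y_1\exp(\int_0^t\Omega_0\,ds),0)$, and differentiating in $y_1$ gives
\[
\|\nabla\omega(\cdot,t)\|_{L^\infty}\;\ge\;c_0\,\exp\!\Big(\int_0^t\Omega_0(s)\,ds\Big),\qquad c_0:=\|\partial_1\omega_0(\cdot,0)\|_{L^\infty}>0.
\]
The constant $c_0$ is positive by the hypothesis that $\omega_0$ does not identically vanish on the $x_1$-axis. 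Feeding this lower bound into the integrability hypothesis keeps $\int_0^t\Omega_0\,ds$ from growing too fast, which gives a uniform lower bound $X_1(x_1,0,t)\ge n e^{-C}$ on the $x_1$-axis. Coupling this with the conservation of $X_1 X_2$ along each trajectory (pinning every trajectory to its initial hyperbola) and the area-preservation $\det\nabla X=1$, we obtain $\supp\omega(t)\subset[n',N']\times[0,M']$ uniformly in $t\in[0,T]$.

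Once the support of $\omega(t)$ is uniformly trapped, the computation from Lemma \ref{lipschitzlemma} extends to $[0,T]$, yielding uniform bounds on $\|\Omega(t)\|_{L^\infty}$ and $\|\nabla u(t)\|_{L^\infty}$. Gr\"onwall applied to the matrix ODE $\partial_t(\nabla X)=\nabla u(X)\,\nabla X$ then gives uniform bounds on $\|\nabla X(t)\|_{L^\infty}$ and $\|\nabla X^{-1}(t)\|_{L^\infty}$, and the transport identity $\nabla\omega=(\nabla\omega_0)(X^{-1})\,\nabla X^{-1}$ delivers a uniform $L^\infty$ bound on $\nabla\omega(t)$. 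Hence $\omega(t)$ converges in $C^1$ as $t\to T^-$ to $\omega(T)\in C_0^1(R)$, still bounded away from the origin and nontrivial on the $x_1$-axis, and a direct application of Theorem \ref{locallywellposed} with initial datum $\omega(T)$ continues the solution to $[0,T+t_0)$. The second assertion is its contrapositive.

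The main obstacle I anticipate is the step from $\int_0^T e^{\int_0^t\Omega_0(s)\,ds}\,dt<\infty$ to a uniform pointwise bound on $\int_0^t\Omega_0\,ds$ on $[0,T]$. A marginal divergence such as $\int_0^t\Omega_0\sim\log\tfrac{1}{T-t}$ would still be compatible with integrability of the exponential, so one either needs to sharpen the axial lower bound on $\|\nabla\omega\|_{L^\infty}$ (for instance by combining it with nearby interior estimates, or exploiting further the non-degeneracy of $\omega_0$ on a neighbourhood of the $x_1$-axis) or to invoke monotonicity of $\Omega_0$ in $t$ (available when $\omega_0$ has a definite sign, which is the regime relevant to the blow-up theorem). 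The remaining steps are standard.
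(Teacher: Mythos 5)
Your overall skeleton (a priori $C^1$ bound on the flow map, then reinvoke local well-posedness) matches the paper's, and several of your structural observations (incompressibility, conservation of $X_1X_2$, the explicit axial trajectory $X_1(x_1,0,t)=x_1e^{-\int_0^t\Omega_0\,ds}$) are correct and are implicitly used in the paper. But the crux of your argument does not close, and you have correctly identified where: from the axial lower bound $\|\nabla\omega(\cdot,t)\|_{L^\infty}\ge c_0e^{G(t)}$ with $G(t)=\int_0^t\Omega_0(s)\,ds$, the hypothesis only yields $\int_0^Te^{G(t)}\,dt<\infty$, and this does \emph{not} bound $G$ on $[0,T)$: take $G(t)=\beta\log\frac{1}{T-t}$ with $0<\beta<1$, which is even convex and increasing, so neither of your proposed rescues (sharpening the axial bound, monotonicity of $\Omega_0$ for signed data) repairs it. Consequently you never obtain the uniform lower bound on $X_1$ over $\supp\omega_0$, hence no uniform trapping of the support, hence no uniform bound on $\|\nabla u\|_{L^\infty}$, and the Gr\"onwall step has nothing to feed on. As written, the proof has a genuine gap.

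The idea you are missing is that one should not first trap the support and then bound $\nabla u$ crudely by $\|\omega\|_{L^\infty}\int_{\supp\omega}|y|^{-2-\alpha}\,dy$ (which degenerates as the support approaches the origin). Instead, the paper proves the linear estimate $\|\nabla u(\cdot,t)\|_{L^\infty}\le C\|\nabla\omega(\cdot,t)\|_{L^\infty}$ with $C=C(\omega_0,\alpha)$ \emph{uniform in the location of the support}: since $\omega(\cdot,t)$ vanishes at the origin for every $t<T$, the mean value theorem gives $|\omega(y,t)|\le\|\nabla\omega(\cdot,t)\|_{L^\infty}|y|$, which converts the kernel $|y|^{-2-\alpha}$ into $|y|^{-1-\alpha}$, integrable near the origin in two dimensions precisely because $\alpha<1$; the entries $x_1\Omega_{x_1}$, $x_2\Omega_{x_2}$ are handled by a similar scaling computation. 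With this estimate, $\int_0^T\|\nabla\omega\|_{L^\infty}\,dt<\infty$ directly controls $\|\nabla X\|_{L^\infty}$ via Gr\"onwall \emph{and} gives the lower bound $X_1\ge n\,e^{-C\int_0^t\|\nabla\omega\|_{L^\infty}ds}>0$ keeping the support off $x_1=0$, so the continuation follows without ever needing your axial lower bound on $\|\nabla\omega\|_{L^\infty}$. Your closing steps (Gr\"onwall on $\partial_t(\nabla X)=\nabla u(X)\nabla X$, the transport identity for $\nabla\omega$, restarting the local theory at $T$) are fine once this estimate is in place.
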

\begin{proof}
By Continuation Theorem of ODE on Banach Spaces, it suffices to show that $\|X(\cdot,t)\|_{C^1}$ is a priori controlled by $\int_{0}^{t}\|\nabla\omega(\cdot,t)\|_{L^{\infty}}dt$. Due to the compact support of $\omega$ (which can be seen from the compact support of $\omega_0$ and incompressibility of $u$), it suffices to show such control for $\|\nabla X(\cdot,t)\|_{L^{\infty}}$. Differentiating and taking $L^\infty$ norm in the particle trajectory map in (\ref{flowmap1}) gives
$$\frac{d}{dt}\|\nabla X(\cdot,t)\|_{L^{\infty}} \leq \|\nabla u(\cdot,t)\|_{L^{\infty}}\|\nabla X(\cdot,t)\|_{L^{\infty}}$$
Applying Gr\"{o}nwall's  inequality we get
$$\|\nabla X(\cdot,t)\|_{L^{\infty}}\leq e^{\int_{0}^{t}\|\nabla u(\cdot,s)\|_{L^{\infty}}ds}$$

By expression (\ref{gradU}), we then show each entry of $\nabla u$ will be a priori controlled by $\|\nabla\omega(\cdot,t)\|_{L^{\infty}}$. First note that we can safely dispel the case where $a=x_1x_2$ is large for two reasons. Firstly the compactness of $supp(\omega)$; secondly, structure of the integrand of $\Omega$ which suggests that the tail values are unharmful and the control near singularity is more vital. So we can assume $0\leq a\ll 1$. Now, since the flow map $u$ points to negative $x_1$ direction and positive $x_2$ direction for all $t$, one must have $N_t=\max\{x_1|(x_1,x_2)\in supp(\omega)\}<N$ for all $t$. Thus using mean value theorem and the elementary mean inequalities, we have 
\begin{align*}
|\Omega| \leq C\int_{y_1y_2\geq a}\frac{\|\nabla\omega(\cdot,t)\|_{L^{\infty}}|y|}{|y|^{2+\alpha}}dy
&= C\|\nabla\omega(\cdot,t)\|_{L^{\infty}}\int_0^N dy_1\int_{\frac{a}{y_1}}^{\infty}\frac{1}{(y_1^2+y_2^2)^{(1+\alpha)/2}}dy_2\\
&\leq C\|\nabla\omega(\cdot,t)\|_{L^{\infty}}\int_{\frac{a}{\xi}}^{\infty}\frac{1}{(\sqrt{\xi^2+y_2^2})^{1+\alpha}}dy_2\\
&\leq C\|\nabla\omega(\cdot,t)\|_{L^{\infty}}\int_{\frac{a}{\xi}}^{\infty}\frac{1}{(\xi+y_2)^{1+\alpha}}dy_2\\
&= C\|\nabla\omega(\cdot,t)\|_{L^{\infty}}(\xi+\frac{a}{\xi})^{-\alpha}\\
&\leq C\|\nabla\omega(\cdot,t)\|_{L^{\infty}} \bigg(\frac{1}{\frac{1}{\xi}+\frac{\xi}{a}}\bigg)^{-\alpha}\\
&= C\|\nabla\omega(\cdot,t)\|_{L^{\infty}} \bigg(\frac{a\xi}{a+\xi^2}\bigg)^{\alpha}\\
&\leq C\|\nabla\omega(\cdot,t)\|_{L^{\infty}} \bigg(\frac{a^2+\xi^2}{a+\xi^2}\bigg)^{\alpha}  \leq C\|\nabla\omega(\cdot,t)\|_{L^{\infty}} 
\end{align*}
where $0< \xi< N_t<N$. Next, the scaling argument $y_1^2=az$ gives 
\begin{align*}
|x_1\Omega_{x_1}| =x_1\partial_{x_1} \int_{0}^{\infty} dy_1\int_{\frac{a}{y_1}}^{\infty}\frac{\omega(y_1,y_2,t)}{(a^2+y_1^2)^{1+\alpha/2}}dy_2
&\leq x_1\int_{0}^{N}\frac{x_2}{y_1}\frac{\omega(y_1,\eta,t)}{((\frac{a}{y_1})^2+y_1^2)^{1+\alpha/2}}dy_1\\
&\leq C\|\nabla\omega(\cdot,t)\|_{L^{\infty}}\int_{0}^{N}\frac{a}{((\frac{a}{y_1})^2+y_1^2)^{1+\alpha/2}}dy_1\\
&\leq C\|\nabla\omega(\cdot,t)\|_{L^{\infty}}a^{(1-\alpha)/2}\int_{0}^{\infty}\frac{z^{(1+\alpha)/2}}{(1+z^4)^{1+\alpha/2}}dz\\
&\leq C\|\nabla\omega(\cdot,t)\|_{L^{\infty}}
\end{align*}
Other terms in $\nabla u$ can be estimated in the same manner. Last, observe that the solution can be continued as long as $\supp(\omega)$ stays away from $x_1=0$. But this will remain true if and only if $\nabla \omega$, which controls $\nabla u$, stays bounded. In turn, a bounded $\nabla u$ will guarantee that $\supp(\omega)$ cannot arrive at $x_1=0$ in finite time. 
\end{proof}
To show blow up, we start with changing coordinates $z_1=\log(x_1x_2), z_2=\log(\frac{x_1}{x_2})$ and writing $\tilde{\Omega}(z_1,t)=\Omega(x(z),t), \tilde{\omega}(z,t)=\omega(x(z),t)$, $\tilde{\omega}_0(z)=\omega_0(x(z))$. In $z$-coordinate, the model (\ref{hyperbolicSQG})(\ref{hyperbolicSQGBS}) can be rewritten as
\begin{equation} \label{modelinnewcoordinate1}
\partial_t\tilde{\omega}+2\tilde{\Omega}\partial_{z_2} \tilde{\omega}=0
\end{equation}
where 
\begin{equation} \label{modelinnewcoordinate2}
\tilde{\Omega}(z_1,t)=\frac{1}{4}\int_{z_1}^{\infty}e^{-\alpha y_1/2}dy_1\int_{-\infty}^{\infty}\frac{\tilde{\omega}(y,t)}{(\cosh y_2)^{1+\alpha/2}}dy_2
\end{equation}
Particularly, all particle trajectories now point to the positive direction of $z_2$-axis due to non-negativity of $\omega_0$ and the trajectory equation in integral form 

\begin{equation} \label{trajectoryintegral}
\tilde{X}(z_1,t)=\frac{1}{2}\int_{0}^{t}ds\int_{z_1}^{\infty}e^{-\alpha y_1/2}dy_1\int_{-\infty}^{\infty}\frac{\tilde{\omega}_0(y_1,y_2-\tilde{X}(y_1,s))}{(\cosh y_2)^{1+\alpha/2}}dy_2
\end{equation}

Set $F(z_1,t)= z_1+\tilde{X}(z_1,t)$ and $Z(t)=\max\{z_1|F(z_1,t)=0\}$.
The following lemmas will guarantee that $Z(t)$ is well-defined. 
\begin{lemma} \label{initialboundedbelow}
Suppose that $\omega_0\in\mathcal{C}_0^1(R)$ and $\omega_0$ is non-negative, not identically zero on $x_1$-axis. Then there exists $Z_1$ such that for every $z_1\leq Z_1$, we have 
$$\int_{-\infty}^{\infty}\tilde{\omega}_0(z_1,z_2)dz_2\geq C >0$$
\end{lemma}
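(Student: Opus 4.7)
The idea is to pass through the change of variables $x_1 = e^{(z_1+z_2)/2}$, $x_2 = e^{(z_1-z_2)/2}$ (which one gets by inverting $z_1=\log(x_1x_2)$, $z_2=\log(x_1/x_2)$) and exploit the fact that integrating $\tilde\omega_0$ in $z_2$ with $z_1$ fixed amounts to sweeping along the hyperbola $x_1x_2 = e^{z_1}$ in the $(x_1,x_2)$--plane. When $z_1$ is very negative, this hyperbola lies very close to the $x_1$-axis over a wide range of $x_1$ values, exactly where $\omega_0$ is assumed to be positive.

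Concretely, since $\omega_0$ is nonnegative, $C^1$, and does not vanish identically on the $x_1$-axis, I can pick $a>0$ with $\omega_0(a,0)>0$; then continuity gives constants $c,\eps,\eta>0$ such that $\omega_0(x_1,x_2) \geq c$ on the rectangle $Q := [a-\eps,a+\eps]\times[0,\eta]$. Next I fix $z_1$ and look at the preimage of $Q$ under the change of variables. The condition $x_1 \in [a-\eps,a+\eps]$ becomes
\[
z_2 \in I(z_1) := [\,2\log(a-\eps)-z_1,\ 2\log(a+\eps)-z_1\,],
\]
an interval whose length $2\log\frac{a+\eps}{a-\eps}$ is a positive constant independent of $z_1$. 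For $z_2\in I(z_1)$ the second coordinate satisfies $x_2 = e^{(z_1-z_2)/2} \leq e^{z_1}/(a-\eps)$, which is at most $\eta$ as soon as $z_1 \leq Z_1 := \log\bigl((a-\eps)\eta\bigr)$.

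Consequently, for every $z_1\le Z_1$ and every $z_2\in I(z_1)$, the point $(x_1(z),x_2(z))$ lies in $Q$, so $\tilde\omega_0(z_1,z_2)\geq c$. Dropping the rest of the integrand (which is nonnegative), I obtain
\[
\int_{-\infty}^{\infty}\tilde\omega_0(z_1,z_2)\,dz_2 \;\geq\; \int_{I(z_1)} c\,dz_2 \;=\; 2c\,\log\frac{a+\eps}{a-\eps} \;=:\; C \;>\;0,
\]
uniformly in $z_1\le Z_1$, which is what we wanted.

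There is no real obstacle: the statement is essentially a quantitative restatement of the geometric fact that the hyperbolas $\{x_1x_2 = \text{const}\}$, for small positive constant, traverse a definite arc within any neighborhood of a point $(a,0)$ on the positive $x_1$-axis. The only mild point to keep track of is verifying that the $x_2$-excursion stays inside $[0,\eta]$ along the relevant $z_2$-interval, which fixes the choice of the threshold $Z_1$.
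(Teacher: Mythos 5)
Your proof is correct and follows essentially the same route as the paper's: both pass back to $x$-coordinates and exploit the fact that the hyperbola $x_1x_2=e^{z_1}$ hugs the $x_1$-axis (where $\omega_0$ has a region of strict positivity) once $z_1$ is sufficiently negative. Your version is in fact a bit more explicit than the paper's, which argues by convergence of the integral $2\int_0^\infty \omega_0(x_1,e^{z_1}/x_1)\,x_1^{-1}dx_1$ to its positive limit as $z_1\to-\infty$, whereas you produce concrete values of $Z_1$ and $C$ from a rectangle of positivity.
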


\begin{proof}
Switching back to $x-$coordinates, we have
\begin{align*}
\int_{-\infty}^{\infty}\tilde{\omega}_0(z_1,z_2)dz_2=2\int_{0}^{\infty}\frac{1}{x_1}\omega_0(x_1,\frac{e^{z_1}}{x_1})dx_1
\end{align*}
Observe that the integral is taken on a hyperbolic section in the support of $\omega$. Since $\omega_0$ is $C^1$, hence as $z_1$ decreases to $-\infty$, the integral will converge to a positive number due to positivity of $\omega_0$. This means that for small enough $z_1$ the integral will be uniformly bounded away from $zero$. So the choice of appropriate $Z_1$ is possible.
\end{proof}

Fix $Z_1<\max \{z_1| z\in\supp \omega_0\}$ to be the maximum for which \textbf{Lemma \ref{initialboundedbelow}} holds. 
\begin{lemma} \label{welldefined}
Assume $\omega_0\in\mathcal{C}_0^{1}(R)$. Then for any $z_1<Z_1$
$$\lim_{t\to\infty}\tilde{X}(z_1,t)=\infty$$
\end{lemma}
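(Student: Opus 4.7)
The plan is to argue by contradiction. Since $\omega_0 \geq 0$ and $(\ref{modelinnewcoordinate1})$ is a pure transport equation, $\tilde{\omega}(\cdot, t) \geq 0$ for all $t$, so $\tilde{\Omega}(\cdot, t) \geq 0$; hence by $(\ref{trajectoryintegral})$, $t \mapsto \tilde{X}(z_1, t)$ is non-decreasing and $L := \lim_{t\to\infty}\tilde{X}(z_1, t) \in [0, \infty]$ exists. Assume toward contradiction that $L < \infty$. It then suffices to produce a time-independent positive lower bound for $\tilde{\Omega}(z_1, t)$, since via $(\ref{trajectoryintegral})$ this immediately forces $\tilde{X}(z_1, t) \to \infty$.

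A key observation from $(\ref{modelinnewcoordinate2})$ is that $\tilde{\Omega}(y_1, t)$ is non-increasing in $y_1$ (the integration domain $[y_1, \infty)$ contracts while the integrand is non-negative), so $y_1 \mapsto \tilde{X}(y_1, t)$ is likewise non-increasing. In particular, $0 \leq \tilde{X}(y_1, t) \leq \tilde{X}(z_1, t) \leq L$ for every $y_1 \geq z_1$. Using the transport representation $\tilde{\omega}(y_1, y_2, t) = \tilde{\omega}_0(y_1, y_2 - \tilde{X}(y_1, t))$ and the substitution $\eta := y_2 - \tilde{X}(y_1, t)$ in the inner integral of $(\ref{modelinnewcoordinate2})$ yields
$$\tilde{\Omega}(z_1, t) = \frac{1}{4}\int_{z_1}^{\infty}e^{-\alpha y_1/2}\int_{-\infty}^{\infty}\frac{\tilde{\omega}_0(y_1, \eta)}{(\cosh(\eta + \tilde{X}(y_1, t)))^{1+\alpha/2}}\,d\eta\,dy_1.$$
Combining $0 \leq \tilde{X}(y_1, t) \leq L$ with the elementary inequality $\cosh(a+b) \leq 2\cosh(a)\cosh(b)$ produces the time-independent lower bound
$$\tilde{\Omega}(z_1, t) \;\geq\; \frac{1}{(2\cosh L)^{1+\alpha/2}}\cdot\frac{1}{4}\int_{z_1}^{\infty}e^{-\alpha y_1/2}\int_{-\infty}^{\infty}\frac{\tilde{\omega}_0(y_1, \eta)}{(\cosh\eta)^{1+\alpha/2}}\,d\eta\,dy_1 \;=:\; c.$$

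Finally, I would verify that $c > 0$ directly from \textbf{Lemma \ref{initialboundedbelow}}: on the subinterval $[z_1, Z_1] \subset [z_1, \infty)$, which is nonempty because $z_1 < Z_1$, the inner integral is strictly positive since $\tilde{\omega}_0(y_1, \cdot) \geq 0$ has strictly positive mass by that lemma and the weight $(\cosh\eta)^{-1-\alpha/2}$ is strictly positive. Thus $\tilde{X}(z_1, t) \geq 2ct \to \infty$, contradicting $L < \infty$.

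I expect the main obstacle to be the time-uniform control of the factor $(\cosh(\eta + \tilde{X}(y_1, t)))^{-(1+\alpha/2)}$ inside the Biot-Savart integral; without the monotonicity of $\tilde{X}(\cdot, t)$ in its first argument, $\tilde{X}(y_1, t)$ could \emph{a priori} be much larger than $L$ for some $y_1 \geq z_1$, making the cosh factor arbitrarily small and killing the lower bound. The combined use of this monotonicity together with $\cosh(a+b) \leq 2\cosh(a)\cosh(b)$ absorbs the shift cleanly and is the heart of the argument.
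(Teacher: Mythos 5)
Your proof is correct and follows essentially the same route as the paper: argue by contradiction, use the monotonicity of $\tilde{X}(\cdot,t)$ in $z_1$ to propagate the assumed bound to all $y_1\in[z_1,Z_1]$, and extract a time-uniform positive lower bound on $\tilde{\Omega}(z_1,t)$ (via Lemma \ref{initialboundedbelow}) that forces linear-in-$t$ growth of $\tilde{X}(z_1,t)$. The only difference is cosmetic: the paper controls the shifted $\cosh$ weight using the explicit localization of $\supp\tilde{\omega}_0$ in $z_2$, while you absorb the shift with $\cosh(a+b)\le 2\cosh a\,\cosh b$; both yield the required constant.
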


\begin{proof}
Suppose by contradiction that there exists $z_1'<Z_1$ such that $\tilde{X}(z_1',t)\leq B<\infty$ for all $t$. Note that because of $\tilde{X}(z,t)=\int_0^{t}\tilde{\Omega}(z_1,s)ds$ and the positivity of $\omega_0$, $\tilde{X}(z_1,t)$ is monotonically decreasing in $z_1$. Hence $\tilde{X}(z_1,t)\leq B$ for all $z_1'<z_1<Z_1$. Using the fact $\supp \tilde{\omega}_0\subset\{(z_1,z_2)| z_1-2\log N_1\leq z_2\leq z_1-2\log n_1\}$ and \textbf{Lemma \ref{initialboundedbelow}}, it is easy to see that for every $y_1\in(z_1',Z_1)$
\begin{align} \label{lowerbound}
\int_{-\infty}^{\infty}\frac{\tilde{\omega}(y_1,y_2,t)}{(\cosh y_2)^{1+\alpha/2}}dy_2&=\int_{-\infty}^{\infty}\frac{\tilde{\omega}_0(y_1,y_2-\tilde{X}(y_1,t))}{(\cosh y_2)^{1+\alpha/2}}dy_2 \geq C e^{-(1+\alpha/2)(y_1+B)} >0
\end{align}
Thus 
$$
\tilde{X}(z_1' ,t)=\int_{0}^{t}\tilde{\Omega}(z_1',s)ds\geq C\int_{0}^{t}\int_{z_1'}^{Z_1} e^{-(1+\alpha)y_1-(1+\alpha/2)B}dy_1ds>Ct\to \infty
$$
All constants $C>0$ in above estimate depend only on $\omega_0,z_1'$ and $\alpha$. So we arrive at a contradiction.  
\end{proof}
We are now well-prepared to prove \textbf{Theorem \ref{blowup}}.
\begin{proof}
Based on the definition of $Z(t)$ and \textbf{Lemma \ref{welldefined}}, it is easy to see that $Z(t)$ is monotonically decreasing thus $a.e.$ differentiable. Then we know
\begin{equation} \label{differentialequality}
\frac{d}{dt}F(Z(t),t)=\partial_{z_1}F(Z(t),t)Z'(t)+\partial_{t}F(Z(t),t)=0\quad a.e.\quad t
\end{equation}
which follows immediately from $F(Z(t),t)=0$. The fact that $\tilde{X}$ being monotonically decreasing in $z_1$ gives that for any $z_1$
\begin{equation} \label{lessthanone}
\partial_{z_1}F(z_1,t)=1+\partial_{z_1}\tilde{X}(z_1,t)\leq 1
 \end{equation}
Let us choose $t_0=\inf \{t>0:Z(t_0)+1\leq Z_1\}$.  Then for all $t> t_0$ and the corresponding $z_1\in[Z(t),Z(t+1)]$, we have $0\leq F(z_1,t)\leq 1$. Now apply \textbf{Lemma \ref{initialboundedbelow}} to estimate as follows
\begin{align} \label{greaterthanzero}
\frac{d}{dt}F(Z(t),t)=\frac{d}{dt} \tilde{X}(Z(t),t) &=\frac{1}{2}\int_{Z(t)}^{\infty}e^{-\alpha y_1/2}dy_1\int_{-\infty}^{\infty}\frac{\tilde{\omega}_0(y_1,y_2-\tilde{X}(y_1,t))}{(\cosh y_2)^{1+\alpha/2}}dy_2\\ \nonumber 
&\geq \frac{1}{2}\int_{Z(t)}^{Z(t)+1}e^{-\alpha y_1/2}dy_1\int_{-\infty}^{\infty}\frac{\tilde{\omega}_0(y_1,y_2-\tilde{X}(y_1,t))}{(\cosh y_2)^{1+\alpha/2}}dy_2 \\ \nonumber
&\geq C \int_{Z(t)}^{Z(t)+1}e^{-\alpha y_1/2} \frac{1}{(\cosh F(z_1,t))^{1+\alpha/2}}dy_1\\ \nonumber
&\geq C \int_{Z(t)}^{Z(t)+1}e^{-\alpha y_1/2}dy_1 \geq C(\alpha,\omega_0)e^{-\alpha Z(t)/2}>0
\end{align}
Combining (\ref{differentialequality}),(\ref{lessthanone}) and (\ref{greaterthanzero}) yields 
$$Z'(t)\leq -\partial_{t}F(Z(t),t) \leq -Ce^{-\alpha Z(t)/2}\quad a.e. \quad t> t_0$$
The solution of above differential inequality shows that $Z(t)$ reaches $-\infty$ in finite time $T$. According to how $Z(t)$ is defined, this would in turn lead to the conclusion that $|\tilde{X}(z_1,t)|$ becomes infinite in finite time. Then the Beal-Kato-Majda criterion proved in \textbf{Proposition \ref{BKM}} demonstrates that the blow up happens in the sense of $\lim_{t\to T}\int_{0}^{t}\|\nabla \omega(\cdot,t)\|_{L^{\infty}}dt=\infty$.
\end{proof}
As an ending remark, let us relate back to the picture (see below). The above proof tells us that in $z-$coordinates all particles (shaded in black in the left figure) in $\supp(\tilde{\omega}_0)$ and to the left of $Z_1$, no matter how far down they are, will travel infinite distances, with vertical trajectories to arrive at $z_1$ axis in finite time. In the $x-$coordinate, $z_1$ axis becomes to the line $x_1=x_2$ and the corresponding particles (shaded in black in the right figure) will now travel with hyperbolic trajectories to cross $x_1=x_2$ in finite time. In particular, the particles in $\supp \omega_0$ lying on $x_1$ axis will travel horizontally and arrive at the origin in finite time. The continuation criterion breaks as a consequence of these particles bringing the support of $\omega$ onto $x_1=0$. 

\begin{figure}[H]
\centering
\begin{minipage}{.4\textwidth}
  \centering
  \includegraphics[width=.7\linewidth]{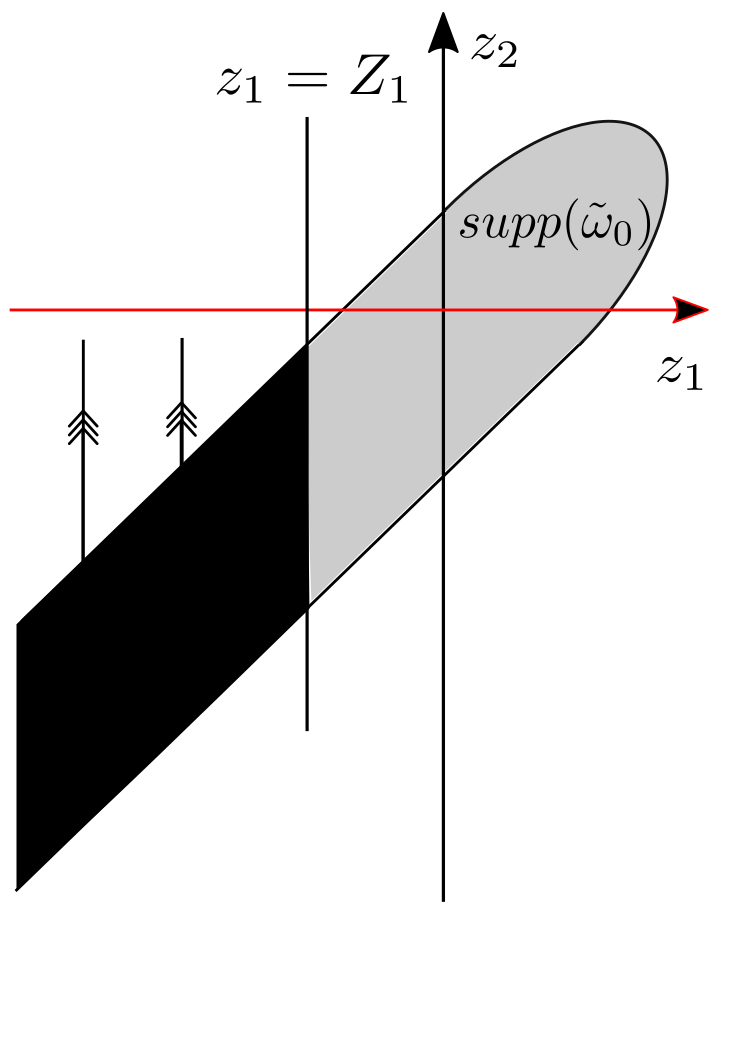}
  \captionof{figure}{In z-coordinates}
\end{minipage}%
\begin{minipage}{.4\textwidth}
  \centering
  \includegraphics[width=.7\linewidth]{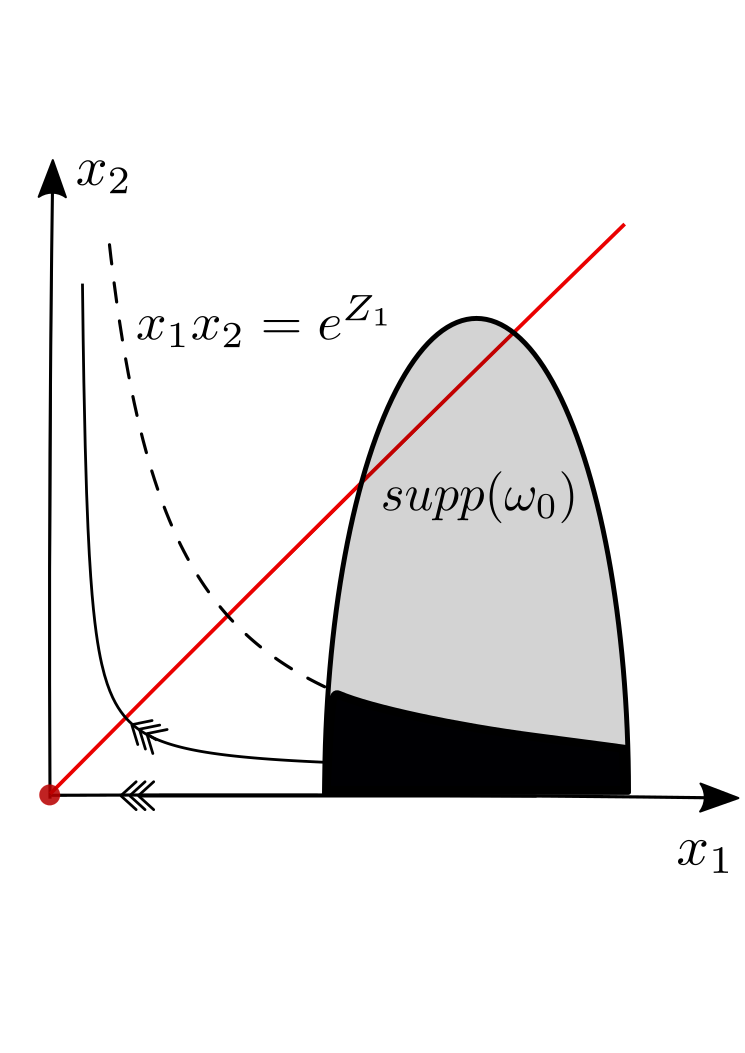}
  \captionof{figure}{In x-coordinates}
\end{minipage}
\end{figure}

\section{Acknowledgement}
I would like to thank Professor Alexander Kiselev for helpful discussions and valuable suggestions throughout the project and thank Department of Mathematics at Duke University for its hospitality. I would also acknowledge support of NSF-DMS award 1712294.

\end{document}